%
\input ./style/arxiv-vmsta.cfg
\documentclass[numbers,compress,v1.0.1]{vmsta}
\usepackage{vtexbibtags}
\usepackage{authorquery}

\volume{5}
\issue{4}
\pubyear{2018}
\firstpage{509}
\lastpage{519}
\aid{VMSTA108}
\doi{10.15559/18-VMSTA108}
\articletype{research-article}


\startlocaldefs

\urlstyle{rm}
\allowdisplaybreaks

\newtheorem{proposition}{Proposition}[section]

\newtheorem{corollary}{Corollary}[section]
\theoremstyle{definition}

\newtheorem{remark}{Remark}[section]

\endlocaldefs

\begin{aqf}
\querytext{Q1}{I see that the title of the paper should be slightly modified to be in line with the main text.}
\querytext{Q2}{This sentence seems to be incomplete. Please extend and complete it.}
\querytext{Q3}{Note: The term 'non-finite divisibility' is somehow awkward. By the usual meaning of words it would mean 'finite divisibility' what, perhaps, is not what it means in fact. 'Non-finite divisibility' is just a shorthand term for the property of 'not being infinite divisible'. If so, this could be said in the text for clarity. 'Non-divisibility' could be another, convenient for reading, shorthand, but again it should be explained that 'non-divisibility' means the property of 'not being infinite divisible'. This note applies to all the text. }
\querytext{Q4}{Note: The abbreviation 'ID' is not used consistently.}
\querytext{Q5}{Why these properties are additional? Aren't they just properties?}
\querytext{Q6}{Note: "derived from their self-similarity" seems to be better than "obtained on the cost of self-similarity".}
\querytext{Q7}{Again, here actually the property of "being not infinitely divisible" is discussed.}
\end{aqf}
\begin{document}

\begin{frontmatter}
\pretitle{Research Article}

\title{On the infinite divisibility of distributions of some inverse
subordinators}

\author[a]{\inits{A.}\fnms{Arun}~\snm{Kumar}}
\author[b]{\inits{E.}\fnms{Erkan}~\snm{Nane}\ead[label=e2]{ezn0001@auburn.edu}\thanksref{cor1}}
\thankstext[type=corresp,id=cor1]{Corresponding author.}
\address[a]{Department of Mathematics, \institution{Indian Institute
of Technology Ropar},
Punjab 140001, \cny{India}}
\address[b]{Department of Mathematics and Statistics,
\institution{Auburn University}, Auburn, AL 36849, \cny{USA}}



\markboth{A. Kumar, E. Nane}{On infinite divisibility of the
distribution of some inverse subordinators}

\begin{abstract}
\querymark{Q1}We consider the infinite divisibility of distributions of some well-known inverse subordinators. Using a tail probability bound, we
establish that distributions of many of the inverse subordinators
used in the literature are not infinitely divisible. We further show
that the distribution of a renewal process time-changed by an inverse
stable subordinator is not infinitely divisible, which in particular
implies that the distribution of the fractional Poisson process is not
infinitely divisible.
\end{abstract}
\begin{keywords}
\kwd{Infinite divisibility}
\kwd{subordinators}
\kwd{inverse subordinators}
\kwd{fractional Poisson process}
\end{keywords}
%

\received{\sday{18} \smonth{3} \syear{2018}}
\revised{\sday{21} \smonth{6} \syear{2018}}
\accepted{\sday{29} \smonth{6} \syear{2018}}
\publishedonline{\sday{20} \smonth{7} \syear{2018}}
\end{frontmatter}

\section{Introduction}
Infinitely divisible (ID) distributions were introduced by de Finetti in
1929. Ever since the research literature on these distributions is
growing rapidly. {A real-valued random variable $X$ with a cumulative
distribution function $F$ is said to be ID if for
each $n>1$, there exist independent identically distributed random
variables $X_1, X_2, \ldots, X_n$ with a distribution function $F_n$
such that
\[
X \stackrel{d}= X_1 +X_2+\cdots X_n.
\]
Well-known examples of ID distributions are
normal, Poisson, exponential, $t$, $\chi^2$ and gamma distributions.
Those that are not ID include half normal, discrete
normal, inverse normal and inverse $t$ distributions.}
ID distributions play a central role in the theory of
L\'evy processes. Note that every continuous-time L\'evy process has
distributions that are necessarily ID, and conversely every ID
distribution generates uniquely a L\'evy process (see Steutel and Van
Harn, \cite{SteHar2004}). Further, in several real life situations  some models
require a random effect to be the sum of several independent random
components\querymark{Q2} with the same distribution. In such situations a convenient
way is to assume infinite divisibility of the distribution of these
random effects. Such situations occur in biology, economics and
insurance. It is worth to mention here that to prove or disprove
infinite divisibility of a certain distribution is sometimes a very
tedious task and it may need an utterly specialized approach. In this article,
we only talk about the infinite divisibility of distributions of
some selected processes that are studied recently in the literature.\looseness=-1

 In recent years time-changed stochastic processes are getting
increased attention due to their applications in finance, geophysics,
fractional partial differential equations and in modeling the anomalous
diffusion in statistical physics (see Janczura et al., \cite{Janetal2011}; Meerschaert et
al., \cite{Meeetal2009,Meeetal2013}; Orsingher and Beghin, \cite{OrsBeg2009}). A time-changed stochastic
process is obtained by changing the time of the process by another
stochastic process. The processes that are used as time-change are
generally subordinators, or inverse subordinators. Subordinators are
non-decreasing L\'evy processes i.e. processes with independent and
stationary increments having non-decreasing sample paths. Well-known subordinators are the Poisson process, the compound Poisson processes,
the gamma process, the inverse Gaussian process, an $\alpha$-stable subordinator
and a tempered $\alpha$-stable subordinator. The first-passage time
process of a subordinator is called an inverse subordinator. For
example, the first-passage times of stable and tempered stable subordinators
are called inverse stable and inverse tempered stable subordinators,
respectively (see, e.g., Meerschaert and Straka, \cite{MeeStr2013}; Kumar and
Vellaisamy, \cite{KumVel2015}). The most popular inverse subordinator is the inverse
$\alpha$-stable subordinator (ISS). Note that ISS is used as a
time-change in the standard Poisson process to define the fractional Poisson
process (see, e.g., Meerschaert et al., \cite{Meeetal2011}; Repin and Saichev, \cite{RepSai2000};
Laskin, \cite{Las2003}; Beghin and Orsingher, \cite{BegOrs2009}). Further, ISS is used as a
time-change with the Brownian motion and a stable process to solve fractional
diffusion equations with a fractional derivative in time and fractional
derivatives both in time and space, respectively (see, e.g., Meerschaert
et al., \cite{Meeetal2009}). The time-change with a subordinator $Y(t)$--$X(Y(t))$-- is
done in the Bochner sense and results in a L\'evy process if the
process $X(t)$ is a L\'evy process.

In this article we study the infinite divisibility of the distribution
of some inverse subordinators corresponding to drift-less
subordinators. We first obtain a bound on the tail probability of these
inverse subordinators. We establish that the distributions of inverse
stable, inverse tempered stable and first-exit times of inverse
Gaussian subordinators are not ID. Further, we also show that the
distribution of a renewal process time-changed by ISS is not ID. In
particular we establish that the distribution of the fractional Poisson
process is not ID.

One should not conclude from these results that the distributions of
inverse subordinators are not ID in general. One counter-example is
the Poisson process. Let $N(t)$ be the Poisson process with rate $\lambda$.
Then the process defined by $T_n =\inf\{t\geq0: N(t)\geq n\}$,
$n=1,2\cdots$ is called the inverse of the Poisson process. For a fixed
$n$, the random variable $T_n$ is an Erlang random variable of order
$n$, with the probability density function
\[
f_{T_n}(t) = \frac{}{} \frac{\lambda e^{-\lambda t}(\lambda t)^{n-1}}{(n-1)!},\quad n=1,2,\ldots
\]
Note that the Erlang distribution is a special case of the gamma distribution
and hence the inverse of the Poisson process $N(t)$ is ID
(see, e.g., Steutel and Van Harn, \cite{SteHar2004}).

Further, the fractional Poisson process, for which applications are
suggested in insurance (Biard and Saussereau, \cite{BiaSau2014}), may not be
appropriate in situations where one needs to divide the total number of
claims in a year (say) in small intervals like months and days with independent identically distributed (i.i.d.)
components due to its \querymark{Q3}non-infinite divisibility.

ID\querymark{Q4} distributions are at the heart of the theory of L\'
evy processes. Every continuous-time L\'evy process has distributions
that are necessarily ID (see, e.g., Steutel and Van Harn, \cite{SteHar2004}; Sato,
\cite{Sat1999}). It is well known in the literature that the inverse stable
subordinator $E(t),\; t\geq0$, doesn't possess independent and
stationary increments and hence is not a L\'evy process (see
Meerschaert and Scheffler, \cite{MeeSch2004}). Our results conclude that it is not possible
even to define a continuous time L\'evy process corresponding to the
distributions of $E(1)$.

\setcounter{equation}{0}
\section{Tail probability estimates of inverse subordinators}
A subordinator is a one-dimensional L\'evy process that is
non-decreasing almost sure (a.s.). Such processes can be thought of as a random
model of time evolution. If $T(t)$ is a subordinator, then we have
%
\begin{equation}
\label{laplace-symb} \mathbb{E} \bigl(e^{-uT(t)} \bigr) = e^{-t\psi(u)},
\end{equation}
where $\psi(u)$ is called the Laplace exponent and have the following form
(see, e.g., Applebaum, \cite{App2009}, p. 53)
%
\begin{equation}
\psi(u) = bu + \int_{0}^{\infty}\bigl(1-e^{-uy}
\bigr)\nu(dy).
\end{equation}
The pair $(b, \nu)$ is called characteristics of the subordinator $T$
and represents the drift and the L\'evy measure respectively. Here we require
$\int_{0}^{\infty}(1\wedge|y|)\nu(dy)<\infty$. In this article
henceforth we only discuss subordinators with $b=0$, also called
driftless subordinators. For a subordinator $T(t)$, the first-exit time
process is defined by
%
\begin{equation}
E(t) = \inf\bigl\{s\geq0: T(s)>t\bigr\},
\end{equation}
and we call this process the inverse subordinator. Note that
%
\begin{align}
\label{tail-bound} \mathbb{P}\bigl(E(t)>x\bigr) &= \mathbb{P}\bigl(T(x) \leq t\bigr) =
\mathbb{P}\bigl( -uT(x) \geq-ut\bigr),\quad  u>0
\nonumber
\\
& = \mathbb{P}\bigl(e^{-uT(x)} \geq e^{-ut}\bigr) \leq
\frac{\mathbb
{E}e^{-uT(x)}}{e^{-ut}}\quad \mbox{(by the Markov inequality)}
\nonumber
\\
& = e^{ut-x\psi(u)}\quad \mbox{(using (\ref{laplace-symb}))}.
\end{align}
Also note that for $b=0$, $\psi'(u) = \int_{0}^{\infty}x e^{-ux}\nu
(dx)$. Further, by the dominated convergence theorem $\psi'(u)\downarrow
0$ as $u\uparrow\infty$ and hence $\psi'$ is invertible. Inequality
\eqref{tail-bound} is true for all $u>0$, and hence we can obtain a
unique upper bound. It is reached at $u$ such that
\begin{equation*}
\frac{d}{du} \bigl[e^{ut - x\psi(u)}\bigr] = 0\; \implies\; u =
\psi'^{-1}(t/x).
\end{equation*}
Thus, we have the following proposition.

\begin{proposition}
The tail probabilities for inverse subordinators satisfy
%
\begin{equation}
\label{tail-prob} \mathbb{P}\bigl(E(t)>x\bigr) \leq e^{t \psi'^{-1}(t/x) - x \psi(\psi
'^{-1}(t/x))},\quad  {\text{for large}}\ x.
\end{equation}
\end{proposition}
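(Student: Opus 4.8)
The plan is to start from the Chernoff-type bound already established in \eqref{tail-bound}, namely $\mathbb{P}(E(t)>x)\le e^{ut-x\psi(u)}$, and to exploit the fact that it holds \emph{simultaneously for every} $u>0$. Since the left-hand side is independent of $u$, the sharpest estimate comes from minimizing the right-hand side over $u$, equivalently from minimizing the exponent $g(u):=ut-x\psi(u)$. First I would locate the stationary point by differentiating, $g'(u)=t-x\psi'(u)$, so that $g'(u)=0$ is equivalent to $\psi'(u)=t/x$; using the invertibility of $\psi'$ noted just before the proposition, this gives the candidate $u^{\ast}=\psi'^{-1}(t/x)$.

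Next I would confirm that $u^{\ast}$ is the actual minimizer rather than just a critical point. Differentiating $\psi'(u)=\int_0^\infty y\,e^{-uy}\,\nu(dy)$ under the integral sign yields $\psi''(u)=-\int_0^\infty y^2 e^{-uy}\,\nu(dy)<0$, hence $g''(u)=-x\psi''(u)>0$ for $x>0$. Thus $g$ is strictly convex on $(0,\infty)$, and its unique interior stationary point $u^{\ast}$ is the global minimum. Substituting $u=\psi'^{-1}(t/x)$ into \eqref{tail-bound} then reproduces the right-hand side of \eqref{tail-prob}, completing the argument.

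I expect the only genuinely delicate point---and the reason for the qualifier ``for large $x$''---to be the existence of $\psi'^{-1}(t/x)$. Since $\psi'(u)\downarrow 0$ as $u\uparrow\infty$, the map $\psi'$ sends $(0,\infty)$ onto $(0,\psi'(0^{+}))$, where $\psi'(0^{+})=\int_0^\infty y\,\nu(dy)$ is the mean of the L\'evy measure. The inverse $\psi'^{-1}(t/x)$ is therefore defined precisely when $t/x<\psi'(0^{+})$. When this mean is finite, that condition forces $x>t/\psi'(0^{+})$, so the optimized bound is valid only for $x$ large enough; for smaller $x$ one has $g'(u)>0$ throughout $(0,\infty)$, the infimum of $g$ is approached as $u\to 0^{+}$ and equals $0$, yielding only the trivial bound $1$. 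I would accordingly make this hypothesis explicit, remarking that it is the sole place where the largeness of $x$ is used, the convexity and substitution steps being otherwise routine.
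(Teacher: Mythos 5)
Your proof is correct and follows the same route as the paper: optimize the Chernoff-type bound $e^{ut-x\psi(u)}$ over $u>0$ and substitute the critical point $u=\psi'^{-1}(t/x)$. You in fact supply two details the paper leaves implicit---the convexity check $g''(u)=-x\psi''(u)=x\int_0^\infty y^2e^{-uy}\,\nu(dy)>0$ confirming that the critical point is the global minimizer, and the observation that ``for large $x$'' is precisely the condition $t/x<\psi'(0^+)$ needed for $\psi'^{-1}(t/x)$ to exist when $\int_0^\infty y\,\nu(dy)<\infty$---so your write-up is, if anything, more complete than the paper's.
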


\setcounter{equation}{0}
\section{Infinite divisibility of distributions of some inverse
subordinators}

To prove the non-infinite divisibility of inverse subordinators in this
article, we use the tail bound \eqref{tail-prob} and a necessary condition
for infinite divisibility which is mentioned here (see, e.g., Steutel, \cite{Ste1979}):
A necessary condition for a cumulative distribution function
$F(x)$ to be ID is
%
\begin{equation}
\label{Steutel-VanHarn} {-\log\bigl(1-F(x)\bigr)} \leq{a x\log x},
\end{equation}
for some $a>0$ and $x$ sufficiently large.

\begin{proposition}[Inverse stable subordinator]\label{non-id-iss}
Let $S_{\alpha}(t)$ be an $\alpha$-stable subordinator with $\alpha
\in(0,1)$. Then the distribution of ISS defined by $E_{\alpha}(t) =
\inf\{s\geq0: S_{\alpha}(s)>t\}$ is not ID.
\end{proposition}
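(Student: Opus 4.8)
The plan is to specialize the general tail bound \eqref{tail-prob} to the $\alpha$-stable case and show that the resulting tail decays so rapidly that it violates the necessary condition \eqref{Steutel-VanHarn}. For an $\alpha$-stable subordinator the Laplace exponent is $\psi(u)=u^{\alpha}$, so $\psi'(u)=\alpha u^{\alpha-1}$, and since $\alpha-1<0$ this is a strictly decreasing bijection of $(0,\infty)$ onto itself with inverse $\psi'^{-1}(s)=(s/\alpha)^{1/(\alpha-1)}$. First I would substitute $s=t/x$; because $1/(\alpha-1)=-1/(1-\alpha)$, for fixed $t$ and large $x$ the optimal value $u=\psi'^{-1}(t/x)$ tends to infinity, which is exactly the regime in which the Markov-inequality bound underlying \eqref{tail-prob} applies.

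The key computation is to insert $u=\psi'^{-1}(t/x)$ into the exponent $t\,\psi'^{-1}(t/x)-x\,\psi(\psi'^{-1}(t/x))$ of \eqref{tail-prob}. Carrying out the algebra, both terms $tu$ and $xu^{\alpha}$ turn out to be proportional to $x^{1/(1-\alpha)}$, and after collecting constants one obtains
\[
\mathbb{P}\bigl(E_{\alpha}(t)>x\bigr)\le \exp\bigl(-C_{t}\,x^{1/(1-\alpha)}\bigr),\qquad \text{for large } x,
\]
with the explicit positive constant $C_{t}=(1-\alpha)\,(\alpha/t)^{\alpha/(1-\alpha)}>0$. Alternatively, one may first reduce to the case $t=1$ via the self-similarity $E_{\alpha}(t)\stackrel{d}{=}t^{\alpha}E_{\alpha}(1)$ (scaling by a positive constant preserves infinite divisibility) and analyze a single tail. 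Writing $F$ for the distribution function of $E_{\alpha}(t)$, this bound is equivalent to $-\log\bigl(1-F(x)\bigr)\ge C_{t}\,x^{1/(1-\alpha)}$.

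Finally I would compare this lower bound with the Steutel necessary condition \eqref{Steutel-VanHarn}. Since $\alpha\in(0,1)$ forces $1/(1-\alpha)>1$, we have $x^{1/(1-\alpha)}/(x\log x)=x^{\alpha/(1-\alpha)}/\log x\to\infty$ as $x\to\infty$. Hence for every fixed $a>0$ the inequality $C_{t}\,x^{1/(1-\alpha)}\le a\,x\log x$ fails for all sufficiently large $x$, so $-\log\bigl(1-F(x)\bigr)\le a\,x\log x$ cannot hold for any $a>0$. This contradicts \eqref{Steutel-VanHarn}, and therefore the distribution of $E_{\alpha}(t)$ is not ID.

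I expect the only delicate point to be the exponent bookkeeping in the key computation: one must verify both that the coefficient $C_{t}$ is strictly positive and, more importantly, that the power of $x$ produced is genuinely $1/(1-\alpha)$, which is strictly larger than $1$. Everything then hinges on this superlinear growth beating the $x\log x$ threshold; the remaining steps are routine.
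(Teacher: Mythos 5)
Your proposal is correct and follows essentially the same route as the paper: specialize the tail bound \eqref{tail-prob} to $\psi(u)=u^{\alpha}$, obtain $-\log\mathbb{P}(E_{\alpha}(t)>x)\ge (1-\alpha)(\alpha/t)^{\alpha/(1-\alpha)}x^{1/(1-\alpha)}$, and note that this superlinear growth violates the Steutel necessary condition \eqref{Steutel-VanHarn}. Your constant $C_t$ agrees exactly with the paper's $d(\alpha,t)$, so there is nothing to add.
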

\begin{proof}
For an $\alpha$-stable subordinator the Laplace exponent is given by
$\psi(u) = u^{\alpha}$. Hence, we have $\psi'(u) = \alpha u^{\alpha
-1}$, which implies $\psi'^{-1}(u) =  (\frac{u}{\alpha}
)^{\frac{1}{\alpha-1}}$. Further,\break $\psi(\psi'^{-1}(u)) =
(\frac{u}{\alpha})^{\frac{\alpha}{\alpha-1}}$. Thus for
large $x$
%
\begin{equation}
\mathbb{P}\bigl(E_{\alpha}(t)>x\bigr) \leq e^{t (\frac{t}{\alpha x}
)^{\frac{1}{\alpha-1}} - x  (\frac{t}{\alpha x} )^{\frac
{\alpha}{\alpha-1}}}.
\end{equation}
Further,
%
\begin{align}
-\log\mathbb{P}\bigl(E_{\alpha}(t)>x\bigr) &\geq x \biggl(
\frac{t}{\alpha
x} \biggr)^{\frac{\alpha}{\alpha-1}} - t \biggl(\frac{t}{\alpha
x}
\biggr)^{\frac{1}{\alpha-1}}
\nonumber
\\
& = (1-\alpha) \biggl(\frac{\alpha}{t} \biggr)^{\frac{\alpha
}{1-\alpha}}
x^{\frac{1} {1-\alpha}} = d(\alpha,t)x^{\frac{1} {1-\alpha }}\quad
\mbox(\textrm{say}),
\end{align}
where $d(\alpha,t)= (1-\alpha) (\frac{\alpha}{t}
)^{\alpha/(1-\alpha)}>0$. We have,
\begin{align*}
\lim_{x\rightarrow\infty}\frac{-\log\mathbb{P}(E_{\alpha
}(t)>x)}{x\log x} & \geq\lim_{x\rightarrow\infty}
\frac{d(\alpha
,t) x^{\frac{1}{1-\alpha}}}{x\log x}
\\
& = \lim_{x\rightarrow\infty}\frac{d(\alpha,t)
x^{\frac{\alpha }{1-\alpha}}}{\log x}\quad  \bigl(\mbox{indeterminate
$\frac{\infty}{\infty}$ form}\bigr)
\\*
& = \lim_{x\rightarrow\infty} d(\alpha,t)\frac{\alpha}{(1-\alpha
)} x^{\frac{\alpha} {1-\alpha}} = \infty.
\end{align*}
Hence, a finite $a>0$ that satisfies equation \eqref
{Steutel-VanHarn} does not exist. Therefore the distribution of $E_{\alpha}(t)$ is
not ID.
\end{proof}

\begin{remark}
It is worthwhile to mention the results about $E_\alpha(t)$ from
Meerschaert and Scheffler \cite{MeeSch2004}. They showed that the increments of
$E_\alpha(t)$ are neither stationary nor independent.
\end{remark}
Next we prove the non-infinite divisibility of  distributions of
inverse tempered stable subordinators (ITSS). Tempered stable
subordinators (TSS) are obtained by exponential tempering in
distributions of stable subordinators (see, e.g., Rosi\'{n}ski, \cite{Ros2007}).
TSS have ID distributions, have exponentially
decaying tail probabilities and have all moments finite, unlike stable
subordinators for which tail probabilities decay polynomially and first
moments are infinite. These properties\querymark{Q5} of TSS are derived
from their self-similarity.
Let $S_{\alpha, \lambda}(t)$ be the TSS with index $\alpha\in(0,1)$
and tempering parameter $\lambda>0$. The Laplace transform (LT) of the
density of TSS (see Meerschaert et al., \cite{Meeetal2013}) is
%
\begin{equation}
\label{tem-LT} \mathbb{E}\bigl(e^{-uS_{\alpha, \lambda}(t)}\bigr) = e^{-t ((u+\lambda
)^{\alpha}-\lambda^{\alpha} )}.
\end{equation}
TSS are also known as relativistic stable subordinators.
%
\begin{proposition} [ITSS]\label{non-id-itss}
The distributions of ITSS defined by $E_{\alpha, \lambda}(t) = \inf\{
s\geq0: S_{\alpha,\lambda}(s)>t\}$ are not ID.
\end{proposition}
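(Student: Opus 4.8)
The plan is to follow the proof of Proposition~\ref{non-id-iss} almost verbatim, since the only change is the Laplace exponent. From the Laplace transform \eqref{tem-LT} the TSS has Laplace exponent $\psi(u) = (u+\lambda)^{\alpha}-\lambda^{\alpha}$, so $\psi'(u) = \alpha(u+\lambda)^{\alpha-1}$ and hence $\psi'^{-1}(v) = (v/\alpha)^{1/(\alpha-1)}-\lambda$, giving $\psi(\psi'^{-1}(v)) = (v/\alpha)^{\alpha/(\alpha-1)}-\lambda^{\alpha}$. First I would record these four quantities explicitly, noting that because $\alpha\in(0,1)$ both exponents $1/(\alpha-1)$ and $\alpha/(\alpha-1)$ are negative, so that as $x\to\infty$ (with $v=t/x\to0$) both $\psi'^{-1}(t/x)$ and $\psi(\psi'^{-1}(t/x))$ tend to $+\infty$, which keeps the estimate \eqref{tail-prob} in force for large $x$.

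Next I would substitute into the tail estimate \eqref{tail-prob} and take logarithms to obtain
\begin{align*}
-\log\mathbb{P}\bigl(E_{\alpha,\lambda}(t)>x\bigr) &\geq x\,\psi\bigl(\psi'^{-1}(t/x)\bigr) - t\,\psi'^{-1}(t/x)\\
&= \Bigl[x\bigl(\tfrac{t}{\alpha x}\bigr)^{\frac{\alpha}{\alpha-1}} - t\bigl(\tfrac{t}{\alpha x}\bigr)^{\frac{1}{\alpha-1}}\Bigr] - \lambda^{\alpha}x + \lambda t.
\end{align*}
The bracketed difference is exactly the expression that appeared in the stable case, so it equals $d(\alpha,t)\,x^{1/(1-\alpha)}$ with $d(\alpha,t)=(1-\alpha)(\alpha/t)^{\alpha/(1-\alpha)}>0$. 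Thus the only new contribution of the tempering is the pair of terms $-\lambda^{\alpha}x+\lambda t$, which are at most linear in $x$.

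Finally I would divide by $x\log x$ and let $x\to\infty$. Since $\alpha\in(0,1)$ we have $1/(1-\alpha)>1$, so the leading term satisfies $d(\alpha,t)x^{1/(1-\alpha)}/(x\log x)=d(\alpha,t)x^{\alpha/(1-\alpha)}/\log x\to\infty$, exactly as in Proposition~\ref{non-id-iss}, while $-\lambda^{\alpha}x/(x\log x)=-\lambda^{\alpha}/\log x\to0$ and $\lambda t/(x\log x)\to0$. Hence
\[
\lim_{x\to\infty}\frac{-\log\mathbb{P}(E_{\alpha,\lambda}(t)>x)}{x\log x}=\infty,
\]
so no finite $a>0$ can satisfy the necessary condition \eqref{Steutel-VanHarn}, and the distribution of $E_{\alpha,\lambda}(t)$ fails to be ID.

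The main obstacle is the single conceptual point that must be checked rather than merely copied: one has to confirm that exponential tempering perturbs the relevant asymptotics only at linear order. Concretely, verifying that the two $\lambda$-dependent terms are $O(x)$ while the stable contribution grows like $x^{1/(1-\alpha)}$ — a strictly super-linear rate precisely because $1-\alpha<1$ — is what guarantees that the tempering cannot rescue infinite divisibility. Everything else is a routine transcription of the stable computation.
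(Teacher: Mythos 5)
Your proposal is correct and follows essentially the same route as the paper: compute $\psi'^{-1}$ and $\psi(\psi'^{-1})$ for the tempered Laplace exponent, plug into the tail bound \eqref{tail-prob}, observe that the tempering contributes only terms linear in $x$ while the stable part grows like $x^{1/(1-\alpha)}$, and conclude via \eqref{Steutel-VanHarn} exactly as in Proposition~\ref{non-id-iss}. Your added remark that the optimizing $u=\psi'^{-1}(t/x)$ is positive only for large $x$ is a small point of care the paper leaves implicit.
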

\begin{proof}
The Laplace exponent for ITSS is given by $\psi(u) = (u+\lambda
)^{\alpha} - \lambda^{\alpha}$. This implies $\psi'^{-1}(u) =
(\frac{u}{\alpha} )^{\frac{1}{\alpha-1}}-\lambda$, $\psi
(\psi'^{-1}(u)) =  (\frac{u}{\alpha} )^{\frac{\alpha
}{\alpha-1}} - \lambda^{\alpha}$. Thus
%
\begin{equation}
\mathbb{P}\bigl(E_{\alpha,\lambda}(t)>x\bigr) \leq e^{-\lambda t+ t
(\frac{t}{\alpha x} )^{\frac{1}{\alpha-1}} - x  (\frac
{t}{\alpha x} )^{\frac{\alpha}{\alpha-1}} + \lambda^{\alpha}x}.
\end{equation}
Hence,
%
\begin{align}
-\log\mathbb{P}\bigl(E_{\alpha, \lambda}(t)>x\bigr) &\geq \lambda t + x \biggl(
\frac{t}{\alpha x} \biggr)^{\frac{\alpha}{\alpha-1}} - t \biggl(\frac{t}{\alpha x}
\biggr)^{\frac{1}{\alpha-1}} - \lambda ^{\alpha}x
\nonumber
\\
& = \lambda t - \lambda^{\alpha}x + (1-\alpha) \biggl(\frac{\alpha
}{t}
\biggr)^{\frac{\alpha}{1-\alpha}} x^{\frac{1} {1-\alpha}}
\\
& = \lambda t - \lambda^{\alpha}x + d(\alpha,t)
x^{\frac{1} {1-\alpha }}\quad  \mbox(\textrm{say}),
\end{align}
It follows that
%
\begin{equation}
\lim_{x\rightarrow\infty}\frac{-\log\mathbb{P}(E_{\alpha,\lambda
}(t)>x)}{x\log x} = \infty.
\end{equation}
Using the same argument as in Proposition \xch{\ref{non-id-iss}}{\eqref{non-id-iss}}, we
conclude that distributions of ITSS are not ID.
\end{proof}

Next we discuss the non-infinite divisibility\querymark{Q7} of the distribution of
inverse of an inverse Gaussian subordinator. It is worth to mention
that an inverse Gaussian subordinator is a particular case of TSS. Let
$G(t)$ be an inverse Gaussian subordinator with parameters $\delta$
and $\gamma$, then its density function is given by
%
\begin{equation}
f_{G(t)}(y) = \frac{\delta t}{2\pi}e^{\delta\gamma t} y^{-3/2}
e^{-\frac{1}{2}(\frac{\delta^2t^2}{y}+\gamma^2 y)}.
\end{equation}
Further, the Laplace exponent for $G(t)$ is given by $\psi(u) = \delta
(\sqrt{2u+\gamma^2} -\gamma)$ (see Applebaum, \cite{App2009}, p. 54). Let $H(t)
= \inf\{s\geq0: G(s)>t \}$ be the first-passage time process. Using
\eqref{tail-prob}, it follows
%
\begin{equation}
\mathbb{P}\bigl(H(t)>x\bigr) \leq e^{-\frac{\delta^2x^2}{t} + \delta\gamma
x-\frac{\gamma^2 t}{2}}.
\end{equation}
Using the similar argument as earlier, we have the following result.
%
\begin{proposition}
The distribution of the first-passage time process $H(t)$ is not
ID.
\end{proposition}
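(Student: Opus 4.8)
The plan is to mirror exactly the structure used in the proofs of Propositions \ref{non-id-iss} and \ref{non-id-itss}, since the paper has already supplied the crucial tail bound $\mathbb{P}(H(t)>x) \leq e^{-\frac{\delta^2 x^2}{t} + \delta\gamma x - \frac{\gamma^2 t}{2}}$. First I would take logarithms to convert this tail estimate into a lower bound on $-\log\mathbb{P}(H(t)>x)$; reversing the inequality under $-\log(\cdot)$ gives
\begin{equation*}
-\log\mathbb{P}\bigl(H(t)>x\bigr) \geq \frac{\delta^2 x^2}{t} - \delta\gamma x + \frac{\gamma^2 t}{2}.
\end{equation*}
The dominant term on the right is the quadratic $\frac{\delta^2}{t}x^2$, which is the decisive feature: it grows faster than the linear $\delta\gamma x$ and the constant $\frac{\gamma^2 t}{2}$, so for large $x$ the whole expression behaves like $\frac{\delta^2}{t}x^2$.

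Next I would test this lower bound against the necessary condition \eqref{Steutel-VanHarn} for infinite divisibility, namely $-\log(1-F(x)) \leq a x\log x$. Dividing the lower bound by $x\log x$ and letting $x\to\infty$,
\begin{equation*}
\lim_{x\rightarrow\infty}\frac{-\log\mathbb{P}(H(t)>x)}{x\log x} \geq \lim_{x\rightarrow\infty}\frac{\frac{\delta^2}{t}x^2 - \delta\gamma x + \frac{\gamma^2 t}{2}}{x\log x} = \lim_{x\rightarrow\infty}\frac{\delta^2\, x}{t\,\log x} = \infty,
\end{equation*}
where the last step follows since $x/\log x \to \infty$. Because the ratio diverges, no finite constant $a>0$ can satisfy \eqref{Steutel-VanHarn} for all sufficiently large $x$, and hence the distribution of $H(t)$ fails the necessary condition and is not ID.

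I do not anticipate any genuine obstacle here, since the quadratic decay of the tail makes the divergence even more pronounced than in the stable and tempered-stable cases, where the exponent was only $x^{1/(1-\alpha)}$ with $1/(1-\alpha)>1$. The only point requiring mild care is the direction of the inequality when applying $-\log$: since $\mathbb{P}(H(t)>x)$ is bounded above by the exponential, its negative logarithm is bounded below, which is precisely what the necessary condition needs to be violated. I would therefore simply invoke the identical reasoning from Proposition \ref{non-id-iss}, concluding that since a finite $a>0$ satisfying \eqref{Steutel-VanHarn} cannot exist, the distribution of $H(t)$ is not ID.
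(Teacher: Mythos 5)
Your proposal is correct and follows exactly the route the paper intends: the paper derives the tail bound $\mathbb{P}(H(t)>x) \leq e^{-\delta^2x^2/t + \delta\gamma x-\gamma^2 t/2}$ and then simply states ``using the similar argument as earlier,'' which is precisely the $-\log$ and divide-by-$x\log x$ computation you carry out against the necessary condition \eqref{Steutel-VanHarn}. No discrepancy; your version just makes the omitted limit explicit.
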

%

%
\begin{remark}
Note that when $\gamma=0$, the distribution of $H(t)$ is folded
Gaussian, which is not ID; the latter is a known result (see, e.g.,
Steutel and Van Harn, \cite{SteHar2004}, p.~126).
\end{remark}
%

%
\begin{remark}
A proof of non-infinite divisibility of distribution of $H(t)$ is
discussed in Vellaisamy and Kumar \cite{VelKum2018}, where the tail probabilities' bound is
obtained by using different techniques.
\end{remark}
 Next, we discuss the tail probabilities for gamma
subordinators. Let $U(t)$ be the gamma subordinator with parameters
$a,b>0$, having the density function
%
\begin{equation}
f_{U(t)}(x) = \frac{b^{at}}{\varGamma(at)} x^{at-1} e^{-bx},\quad
x>0.
\end{equation}
The Laplace exponent for the gamma subordinator is given by $\psi(u) = a \log
(1+\frac{u}{b})$ (see Applebaum, \cite{App2009}, p. 55), which implies $\psi
'^{-1}(u) = \frac{a-bu}{u}$. Let $V(t)$ be the first-passage time of
$U(t)$, then using \eqref{tail-prob}
%
\begin{equation}
\mathbb{P}\bigl(V(t) > x\bigr) \leq \biggl(\frac{bt}{ax}
\biggr)^{ax} e^{ax-bt},\quad  \mbox{for large $x$}.
\end{equation}
It follows that $\lim_{x\rightarrow\infty}\frac{-\log\mathbb{P}(V(t)
>x)}{x\log x} \geq a$ and hence unlike Proposition \ref{non-id-iss}
there is no obvious contradiction. So, we can't say anything about the
infinite divisibility of first-exit times of gamma subordinators. In
this article, we are not able to conclude whether inverse of a gamma
subordinator has ID marginals or not. It is worth to
mention that inverse Gaussian distributions or, more generally,
generalized inverse Gaussian (GIG) distributions are generalized gamma
convolutions (Halgreen, \cite{Hal1979}) and hence are ID. The inverse
gamma subordinator and the first-exit times of a gamma subordinator are
different processes. The density of an inverse gamma subordinator is a
particular case of GIG densities which are ID.

 Next we discuss some transformed processes of the inverse
subordinators. Consider the transformed ISS $E(t)^p,\; p>0$. We have
%
\begin{align}
\label{transformed-ISS} \mathbb{P}\bigl(E(t)^p >x\bigr) &= \mathbb{P}\bigl(E(t)
>x^{1/p}\bigr)
\nonumber
\\
& = \mathbb{P}\bigl(T\bigl(x^{1/p}\bigr) < t\bigr) = \mathbb{P}
\bigl(e^{-uT(x^{1/p})} \geq e^{-ut}\bigr)
\nonumber
\\
& \leq e^{ut-x^{1/p}\psi(u)}, \quad u>0
\nonumber
\\
& \leq e^{t \psi'^{-1}(t/x^{1/p}) - x^{1/p} \psi(\psi
'^{-1}(t/x^{1/p}))},\quad \mbox{for large $x$}.
\end{align}
Using \eqref{transformed-ISS} and the similar argument as in \xch{Propositions
\ref{non-id-iss} and \ref{non-id-itss}}{propositions
\eqref{non-id-iss} and \eqref{non-id-itss}}, we have the following result.
%
\begin{proposition}
The transformed ISS $E_{\alpha}(t)^p$ and transformed ITSS $E_{\alpha
, \lambda}(t)^p$ do not have ID distributions for $p
< 1/(1-\alpha)$. Transformed first-passage times of inverse Gaussian
subordinators defined by $H(t)^p$ do not have ID
distributions for $p<2$. Further, transformed first-passage times of
gamma subordinators defined by $V(t)^p$ do not have infinitely
divisible distributions for $p<1$.
\end{proposition}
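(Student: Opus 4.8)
The plan is to apply, for each of the four transformed processes, the tail estimate \eqref{transformed-ISS} together with the necessary condition \eqref{Steutel-VanHarn}, reusing the mechanism of Propositions~\ref{non-id-iss} and~\ref{non-id-itss}. The organizing observation is that raising to the power $p$ simply replaces $x$ by $x^{1/p}$ in each tail bound, since $\mathbb{P}(E(t)^p > x) = \mathbb{P}(E(t) > x^{1/p})$. Consequently the lower bounds on $-\log\mathbb{P}(\cdot > x)$ already obtained for the untransformed processes carry over with $x$ replaced by $x^{1/p}$, and the whole argument reduces to deciding for which $p$ the transformed growth rate still dominates $x\log x$.

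First I would dispatch the transformed ISS. Feeding $\psi(u)=u^{\alpha}$ into \eqref{transformed-ISS} and repeating the estimate from Proposition~\ref{non-id-iss} gives, for large $x$,
\[
-\log\mathbb{P}\bigl(E_{\alpha}(t)^p > x\bigr) \geq d(\alpha,t)\, x^{1/(p(1-\alpha))},
\]
with the same positive constant $d(\alpha,t)$. Dividing by $x\log x$, the quotient behaves like $x^{1/(p(1-\alpha))-1}/\log x$, which tends to $\infty$ exactly when $1/(p(1-\alpha)) > 1$, i.e. $p < 1/(1-\alpha)$; in that range \eqref{Steutel-VanHarn} cannot hold and the law is not ID. The transformed ITSS is handled identically: the tempering only adds the lower-order terms $\lambda t - \lambda^{\alpha} x^{1/p}$, which are absorbed by $d(\alpha,t)\, x^{1/(p(1-\alpha))}$, leaving the same threshold $p<1/(1-\alpha)$.

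The remaining two cases follow the same template with their own leading exponents. For $H(t)^p$, the inverse Gaussian bound $\mathbb{P}(H(t)>x)\leq e^{-\delta^2x^2/t+\delta\gamma x-\gamma^2t/2}$ produces, after $x\mapsto x^{1/p}$, a dominant term of order $x^{2/p}$, so the quotient with $x\log x$ diverges precisely when $2/p>1$, that is $p<2$. The gamma case $V(t)^p$ is the one requiring care, and I expect it to be the main obstacle: here the untransformed leading term is $a\,x\log x$ itself rather than a pure power, so the substitution yields $(a/p)\,x^{1/p}\log x$, and dividing by $x\log x$ leaves $(a/p)\,x^{1/p-1}$. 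This diverges if and only if $1/p>1$, i.e. $p<1$---which is exactly why the borderline case $p=1$ produced only the inconclusive constant $a$ noted earlier. In each instance, once the dominant term has been correctly isolated under the power transformation, the divergence of the limit is a routine comparison of exponents, so the only genuine subtlety is this bookkeeping of the leading order and the identification of the critical $p$.
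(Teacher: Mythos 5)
Your proposal is correct and follows essentially the same route as the paper: substitute $x\mapsto x^{1/p}$ in the tail bound \eqref{transformed-ISS}, reuse the lower bounds on $-\log\mathbb{P}(\cdot>x)$ from the untransformed cases, and compare the leading exponent against $x\log x$ via the necessary condition \eqref{Steutel-VanHarn}. The paper writes out only the gamma case explicitly (declaring the others similar), and your identification of that case as the delicate one — because its leading term is $(a/p)x^{1/p}\log x$ rather than a pure power — matches the paper's own emphasis and computation.
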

\begin{proof}
We here provide the proof for an inverse gamma subordinator only. Proofs
for other subordinators follow similarly. Note that
\begin{align*}
\mathbb{P}\bigl(V(t)^p >x\bigr) \leq \biggl(\frac{bt}{ax^{1/p}}
\biggr)^{ax^{1/p}} e^{ax^{1/p}-bt},\quad  \mbox{for large $x$}.
\end{align*}
Thus $-\log\mathbb{P}(V(t)^p >x) \geq ax^{1/p} \log(a) + \frac
{a}{p}x^{1/p}\log x + bt - ax^{1/p} \log(bt)-ax^{1/p}$, which implies
%
\begin{equation}
\lim_{x\rightarrow\infty}\frac{-\log\mathbb{P}(V(t)>x)}{x\log x} = \infty,\quad \mbox{for}\; 0<p<1,
\end{equation}
and hence by the necessary condition \eqref{Steutel-VanHarn}, $V(t)^p$
does not have ID distribution for $0<p<1$.
\end{proof}

\setcounter{equation}{0}
\section{Compositions of ISS}
We can easily show that $E_{\alpha}(t)$ is self-similar with
self-similarity index $\alpha$. Note that
\begin{align*}
\mathbb{P}\bigl(E_{\alpha}(ct) \leq x\bigr) &= \mathbb{P}
\bigl(S_{\alpha}(x) \geq ct\bigr) = \mathbb{P} \biggl(\frac{1}{c}S_{\alpha}(x)
\geq t \biggr)
\\
& = \mathbb{P}\bigl(S_{\alpha}\bigl(x/c^{\alpha}\bigr) \geq t\bigr)
= \mathbb{P} \biggl(E_{\alpha}(t) \leq\frac{x}{c^{\alpha}} \biggr)
\\
& = \mathbb{P}\bigl(c^{\alpha}E_{\alpha}(t) \leq x\bigr),
\end{align*}
and hence $E_{\alpha}(ct) \stackrel{d}= c^{\alpha}E(t)$.

For a strictly increasing subordinator $T(t)$ with the Laplace exponent
$\psi(u)$, the density function $q(x,t)$ of the inverse subordinator
has the LT with respect to time variable (see Meerschaert and
Scheffler, \cite{MeeSch2008})
\begin{equation*}
\mathcal{L}_t\bigl(q(x,t)\bigr) =\int_0^\infty
e^{-st}q(x,t)dt= \frac{1}{s} \psi(s) e^{-x \psi(s)}.
\end{equation*}
Let $g(x,t)$ be the density function of the ISS $E_{\alpha}(t)$. Then
$\mathcal{L}_t(g(x,t)) = s^{\alpha-1}e^{-xs^{\alpha}}$. Let $E^*(t)
= E_{\alpha_1}(E_{\alpha_2}(t))$ represent the composition of two
independent inverse stable subordinators. Further, let $h(x,t)$ be the
density function of $E^*(t)$ and let $h_1(x,t)$ and $h_2(x,t)$ be the
density functions of $E_{\alpha_1}(t)$ and $E_{\alpha_2}(t)$
respectively. Then
\begin{align*}
h(x,t) = \int_{0}^{\infty}h_1(x,r)h_2(r,t)dr.
\end{align*}
Thus
\begin{align*}
\mathcal{L}_t\bigl(h(x,t)\bigr) &= \int_{0}^{\infty}h_1(x,r)
\mathcal {L}_t\bigl(h_2(r,t)\bigr)dr
\\
& = \int_{0}^{\infty}h_1(x,r)
s^{\alpha_2-1}e^{-rs^{\alpha_2}} dr
\\
& = s^{\alpha_2-1} \int_{0}^{\infty}h_1(x,r)e^{-rs^{\alpha_2}}
dr
\\
& = s^{\alpha_2-1} \bigl(s^{\alpha_2}\bigr)^{\alpha_1-1}
e^{-xs^{\alpha
_1\alpha_2}}
\\
& = s^{\alpha_1\alpha_2 -1} e^{-xs^{\alpha_1\alpha_2}}.
\end{align*}
Hence $E^*(t) = E_{\alpha_1}\circ E_{\alpha_2}(t) = E_{\alpha
_1}(E_{\alpha_2}(t))$ is the same in distribution as an ISS of index
$\alpha_1\alpha_2$. In general, let $E_{\alpha_1}(t), E_{\alpha
_2}(t),\ldots, E_{\alpha_n}(t)$ be independent ISS with indices
$\alpha_1, \alpha_2,\ldots, \alpha_n$ respectively. Then the
process defined by the composition $E^*(t) = E_{\alpha_1}\circ
E_{\alpha_2}\circ\cdots\circ E_{\alpha_n}(t)$ is the same in
distribution as an ISS with index $\alpha_1\alpha_2\cdots\alpha_n$.
Further, the distribution of the process $E^*(t)$ is not infinitely
divisible. Next, we prove the non-infinite divisibility of the
distribution of a time-changed ISS where the time-change is a general subordinator.

\begin{remark}
Nane \cite{Nan2010} has considered the composition of independent inverse stable
subordinators of index $\alpha=1/2$. He observed that for fixed $t\geq
0$, the $k$-iterated Brownian motion
\[
|I_{k}(t)|=\big|B_1\bigl(\big|B_2\bigl(\big|\cdots
\bigl(|B_{k}(t)|\bigr)\cdots\big|\bigr)\big|\bigr)\big|
\]
and $E^{1/2^k}(t)=E_{1/2}\circ E_{1/2}(t)\cdots\circ E_{1/2}(t)$ have
the same one-dimensional distributions.
\end{remark}

\begin{proposition}
Let $T(t)$ be a general subordinator with finite mean i.e. a positive
L\'evy process with non-decreasing sample paths having $\mathbb
{E}(T(1)) < \infty$. Then the time-changed process $E_{\alpha}(T(t))$
does not have ID distributions.
\end{proposition}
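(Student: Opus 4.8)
The plan is to show that the distribution of $E_\alpha(T(t))$ violates the necessary condition \eqref{Steutel-VanHarn}, that is, to establish
\[
\lim_{x\to\infty}\frac{-\log\mathbb{P}(E_\alpha(T(t))>x)}{x\log x}=\infty,
\]
exactly as in Propositions \ref{non-id-iss} and \ref{non-id-itss}. Taking $T$ and $S_\alpha$ independent, I would first condition on the time change and write
\[
\mathbb{P}(E_\alpha(T(t))>x)=\int_0^\infty\mathbb{P}(E_\alpha(s)>x)\,\mathbb{P}(T(t)\in ds).
\]
Into the integrand I would feed the tail estimate already proved in Proposition \ref{non-id-iss} together with the self-similarity relation $E_\alpha(s)\stackrel{d}{=}s^\alpha E_\alpha(1)$ recorded at the start of this section, which yields, on the range of $s$ relevant below,
\[
\mathbb{P}(E_\alpha(s)>x)=\mathbb{P}\bigl(E_\alpha(1)>xs^{-\alpha}\bigr)\le\exp\bigl(-\kappa\,s^{-\beta}x^{\gamma}\bigr),
\]
where $\gamma=1/(1-\alpha)>1$, $\beta=\alpha/(1-\alpha)$ and $\kappa=(1-\alpha)\alpha^{\alpha/(1-\alpha)}>0$.

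The next step is to split the integral at a cut-off $M=M(x)\to\infty$. On $\{s\le M\}$ one has $s^{-\beta}\ge M^{-\beta}$, so that contribution is at most $\exp(-\kappa M^{-\beta}x^{\gamma})$, which is super-exponentially small provided $M$ grows slower than $x^{1/\alpha}$ (note $\gamma/\beta=1/\alpha$); this also keeps $xs^{-\alpha}$ large on the whole range, legitimising the bound above. On $\{s>M\}$ I would bound the integrand by $1$ and invoke the finite-mean hypothesis through Markov's inequality, $\mathbb{P}(T(t)>M)\le\mathbb{E}(T(t))/M=t\,\mathbb{E}(T(1))/M$. Combining the two pieces gives
\[
\mathbb{P}(E_\alpha(T(t))>x)\le e^{-\kappa M^{-\beta}x^{\gamma}}+\frac{t\,\mathbb{E}(T(1))}{M}.
\]

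The hard part is the optimization over $M$, and this is exactly where I expect the real obstacle to lie. The first term is harmless for any $M\ll x^{1/\alpha}$, but the finite-mean hypothesis controls the second term only polynomially, through $1/M$. Choosing $M=x^{\theta}$ with $\theta<1/\alpha$ forces merely $-\log\mathbb{P}(E_\alpha(T(t))>x)\ge\theta\log x+O(1)$, whose ratio with $x\log x$ tends to $0$, not $\infty$; so Markov together with a finite first moment does not by itself deliver the violation of \eqref{Steutel-VanHarn}. To close the argument one must upgrade $\mathbb{P}(T(t)>M)$ to faster-than-polynomial decay — for instance a Chernoff/exponential-moment estimate, or more precisely a stretched-exponential tail $\mathbb{P}(T(t)>M)\le\exp(-c M^{q})$ with some $q>1$. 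Balancing $\kappa s^{-\beta}x^{\gamma}$ against $c M^{q}$ at the scale $s\sim M$ then produces a genuine super-exponential bound $\exp(-c'x^{\rho})$ with $\rho=q/\bigl(q(1-\alpha)+\alpha\bigr)>1$, after which the ratio does blow up and the conclusion follows. I would therefore treat the tail behaviour of $T$ as the crux of the proof, and I suspect that a clean statement needs the finite-mean assumption strengthened in this direction: finite mean alone guarantees only that the heavy-$s$ part is negligible, not that it is super-exponentially negligible, which is what the Steutel criterion demands.
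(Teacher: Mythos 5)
Your route does not prove the proposition, and your own diagnosis of where it breaks is accurate: under the bare finite-mean hypothesis the subordinator $T(t)$ may have a polynomially heavy tail (e.g.\ L\'evy measure $\nu(dy)\sim y^{-2-\epsilon}dy$ at infinity), in which case $E_\alpha(T(t))\stackrel{d}{=}T(t)^{\alpha}E_\alpha(1)$ inherits a polynomial tail, so $-\log\mathbb{P}(E_\alpha(T(t))>x)=O(\log x)$ and the necessary condition \eqref{Steutel-VanHarn} is actually \emph{satisfied}. No choice of cut-off $M(x)$ can rescue the argument, because the criterion itself is silent here. The error is not in your execution but in your conclusion: you infer that the hypothesis must be strengthened, whereas the paper proves the stated result with finite mean only, by abandoning tail estimates altogether.

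The paper's proof is a soft limit argument. By self-similarity, $E_\alpha(T(t))/t^{\alpha}\stackrel{d}{=}(T(t)/t)^{\alpha}E_\alpha(1)$, and the strong law for subordinators (Bertoin \cite{Ber1996}, p.~92) gives $T(t)/t\to\mathbb{E}(T(1))$ a.s., hence $E_\alpha(T(t))/t^{\alpha}\stackrel{d}{\to}(\mathbb{E}(T(1)))^{\alpha}E_\alpha(1)$. If the $E_\alpha(T(t))$ had ID distributions, so would the rescaled variables (ID on $\mathbb{R}_+$ is preserved under positive scaling, Steutel and Van Harn \cite{SteHar2004}, Prop.~2.1), and ID is closed under convergence in distribution (Prop.~2.2); thus $E_\alpha(1)$ would be ID, contradicting Proposition \ref{non-id-iss}. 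This is the ingredient you are missing: rather than verifying a tail criterion for $E_\alpha(T(t))$ directly, one transfers the already-established non-divisibility of $E_\alpha(1)$ through a distributional limit in $t$. Your computation is not worthless --- it shows the direct tail route does work when $T(t)$ has stretched-exponential tails --- but the general statement requires the limit argument.
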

\begin{proof}
By self-similarity of $E_\alpha(t)$ we have
\begin{align*}
\frac{E_{\alpha}(T(t))}{t^{\alpha}} &\stackrel{d} = \biggl(\frac
{T(t)}{t}
\biggr)^{\alpha} E_{\alpha}(1)
\\
& \stackrel{a.s.}\longrightarrow\bigl(\mathbb{E}\bigl(T(1)\bigr)
\bigr)^{\alpha
}E_{\alpha}(1),
\end{align*}
as $t\to\infty$. Here we have used the fact that for a subordinator
$T(t)/t \rightarrow\mathbb{E}(T(1))$ a.s. as $t\rightarrow\infty$
(see, e.g., Bertoin, \cite{Ber1996}, p. 92). Since the a.s. convergence
implies the convergence in distribution (see, e.g., Chung, \cite{Chu2001}), it follows
$\frac{E_{\alpha}(T(t))}{t^{\alpha}} \,{\stackrel{d} \rightarrow}\,
(\mathbb{E}(T(1)))^{\alpha}E_{\alpha}(1)$. Assume that $E_{\alpha
}(T(t))$ has an ID distribution, then $E_{\alpha
}(T(t))/t^{\alpha}$ will also have an ID distribution
for each $t$ (see, e.g., Steutel and Van Harn, \cite{SteHar2004}, Prop. 2.1, p. 94).
Next we recall Prop. 2.2 from Steutel and Van Harn (\cite{SteHar2004}, p.~94): If
a sequence of $\mathbb{R}_+$-valued
random variables $X_n, n\geq0$ with ID distributions
converges in distribution to $X$, then $X$ has ID distribution.
Hence, the limit in distribution i.e. $(\mathbb{E}(T(1)))^{\alpha
}E_{\alpha}(1)$ will also have an ID distribution or,
equivalently, $E_{\alpha}(1)$ will also have an ID
distribution. This is a contradiction.
\end{proof}

\begin{corollary}
Let $S_{\alpha, \lambda}(t), G(t)$ and $U(t)$ be tempered stable,
inverse Gaussian and gamma subordinators. Then the distributions of
time-changed processes $E_{\alpha}(S_{\alpha, \lambda}(t))$,
$E_{\alpha}(G(t)) $ and $E_{\alpha}(U(t))$ are not infinitely
divisible. Further, $E^*(S_{\alpha, \lambda}(t))$, $E^*(G(t)) $ and
$E^*(U(t))$ also do not have ID distributions, where
$E^*$ is the composition of $n$ independent inverse stable subordinators.
\end{corollary}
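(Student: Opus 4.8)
The plan is to reduce both assertions to the Proposition just established, which states that $E_{\alpha}(T(t))$ fails to be ID whenever $T$ is a subordinator with $\mathbb{E}(T(1))<\infty$. Consequently the first claim is nothing more than checking that each of the three named subordinators has a finite first moment, and the second claim amounts to recognizing, via the Section~4 computation, that the $n$-fold composition $E^{*}$ is distributionally a single ISS, so that $E^{*}(T(t))$ reduces to $E_{\beta}(T(t))$ for an appropriate index $\beta$.

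First I would verify the finite-mean hypothesis for each subordinator by computing $\mathbb{E}(T(1))=\psi'(0)$ from the Laplace exponents already recorded in Section~3. For the TSS, $\psi(u)=(u+\lambda)^{\alpha}-\lambda^{\alpha}$ gives $\psi'(0)=\alpha\lambda^{\alpha-1}<\infty$; for the inverse Gaussian subordinator, $\psi(u)=\delta(\sqrt{2u+\gamma^{2}}-\gamma)$ gives $\psi'(0)=\delta/\gamma<\infty$; and for the gamma subordinator, $\psi(u)=a\log(1+u/b)$ gives $\psi'(0)=a/b<\infty$. With finiteness of the mean confirmed in each case, the Proposition applies verbatim and yields that $E_{\alpha}(S_{\alpha,\lambda}(t))$, $E_{\alpha}(G(t))$ and $E_{\alpha}(U(t))$ are not ID.

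For the composition statement I would invoke the calculation of Section~4, where it was shown that $E^{*}=E_{\alpha_1}\circ\cdots\circ E_{\alpha_n}$ has, for each fixed $t$, the same one-dimensional distribution as a single ISS $E_{\beta}$ with $\beta=\alpha_1\cdots\alpha_n\in(0,1)$. Assuming, as throughout, that the time-change $T$ is independent of the inverse-stable factors, I would condition on $\{T(t)=s\}$ and integrate against the law of $T(t)$; since $E^{*}(s)\stackrel{d}{=}E_{\beta}(s)$ for every deterministic $s$, this gives $E^{*}(T(t))\stackrel{d}{=}E_{\beta}(T(t))$ for each $t$. The previous paragraph, with $\alpha$ replaced by $\beta$, then shows that $E_{\beta}(T(t))$ is not ID, hence neither is $E^{*}(T(t))$, for each of the three choices $T=S_{\alpha,\lambda}$, $G$, $U$.

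The computations involved are routine, so the only point demanding genuine care is the distributional reduction in the composition case: one must ensure that the identity $E^{*}(s)\stackrel{d}{=}E_{\beta}(s)$, valid for each deterministic $s$, lifts to an identity for the randomly time-changed variables. This is precisely where independence of $T$ from the inverse-stable subordinators enters, through the conditioning argument; without it the reduction could fail. Everything else is a direct transcription of the Proposition to the specific subordinators at hand.
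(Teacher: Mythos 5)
Your proposal is correct and follows exactly the route the paper intends: the corollary is stated without proof as an immediate consequence of the preceding proposition, and your verification that $\mathbb{E}(T(1))=\psi'(0)$ is finite for each of the three subordinators, together with the reduction of $E^{*}$ to a single ISS of index $\alpha_1\cdots\alpha_n$ via the Section~4 computation, supplies precisely the missing details. Your extra care about lifting $E^{*}(s)\stackrel{d}{=}E_{\beta}(s)$ to the randomly time-changed variables by conditioning on $T(t)$ (using independence) is a point the paper leaves implicit, and it is handled correctly.
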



\setcounter{equation}{0}
\section{Time-changed renewal processes}

Let $W_i,\; i=1,2,\ldots $ be a sequence of i.i.d. a.s. positive random
variables. Then the random
walk
$T_0 =0, T_n = W_1+ \cdots+ W_n$, $n \geq1$, is said to be a renewal
sequence and the counting process
$N(t) = \max\{i: T_i\leq t\}$ is called the corresponding renewal
process (see, e.g., Mikosch, \cite{Mik2009}, p. 59). We have the following result for
the time-changed renewal process.

\begin{proposition}\label{non-id-renewal}
Let $N(t)$ be a renewal process with finite expectations of the inter-arrival times $W_i$, $\mathbb{E}W_1 =
\lambda^{-1}$. The
renewal process time-changed by an ISS defined by $N(E_{\alpha}(t))$
does not have ID distribution.
\end{proposition}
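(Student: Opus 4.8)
The plan is to avoid tail estimates altogether and instead mimic the self-similarity-plus-weak-limit argument of the preceding proposition, reducing the claim to the already-established non-infinite divisibility of $E_\alpha(1)$ from Proposition \ref{non-id-iss}. The starting point is the elementary renewal strong law: since the inter-arrival times are i.i.d., a.s. positive, with $\mathbb{E}W_1=\lambda^{-1}\in(0,\infty)$, one has $N(s)/s\to\lambda$ a.s. as $s\to\infty$ (see, e.g., Mikosch, \cite{Mik2009}). Because $E_{\alpha}(t)\to\infty$ a.s. as $t\to\infty$ and $N$ is independent of the time change, the convergence holds along the random argument $s=E_{\alpha}(t)$, so that $N(E_{\alpha}(t))/E_{\alpha}(t)\to\lambda$ a.s.

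Next I would invoke the self-similarity of the ISS derived earlier, namely $E_{\alpha}(t)\stackrel{d}{=}t^{\alpha}E_{\alpha}(1)$, so that $E_{\alpha}(t)/t^{\alpha}\stackrel{d}{=}E_{\alpha}(1)$ for every $t$. Writing
\[
\frac{N(E_{\alpha}(t))}{t^{\alpha}}=\frac{N(E_{\alpha}(t))}{E_{\alpha}(t)}\cdot\frac{E_{\alpha}(t)}{t^{\alpha}},
\]
the first factor converges a.s. (hence in probability) to the constant $\lambda$, while the second is distributed as $E_{\alpha}(1)$. An application of Slutsky's theorem then yields the weak limit $N(E_{\alpha}(t))/t^{\alpha}\stackrel{d}{\to}\lambda E_{\alpha}(1)$ as $t\to\infty$.

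The conclusion then follows as before. Assuming, for contradiction, that $N(E_{\alpha}(t))$ is ID, the scaled variable $N(E_{\alpha}(t))/t^{\alpha}$ is ID for each $t$ (Steutel and Van Harn, \cite{SteHar2004}, Prop. 2.1), and since a weak limit of ID distributions is ID (Prop. 2.2 there), the limit $\lambda E_{\alpha}(1)$ is ID, equivalently $E_{\alpha}(1)$ is ID. This contradicts Proposition \ref{non-id-iss}, proving the claim.

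The main obstacle is the middle step: making the passage from the a.s. renewal limit to the weak limit of the scaled composition fully rigorous. One must check (i) that $E_{\alpha}(t)\to\infty$ a.s., so the renewal law may be composed with the random time; (ii) that the two factors, though dependent, still fall under Slutsky's theorem precisely because one of them has a \emph{constant} limit; and (iii) that infinite divisibility is preserved both under the deterministic scaling by $t^{-\alpha}$ and under the weak limit. Each point is standard, but the dependence between the two factors and the exact invocation of the ID-preservation propositions are what need care. I would deliberately route the proof this way rather than through the tail criterion of Section 3, because at least in the fractional Poisson case the tail of $N(E_{\alpha}(t))$ appears to be only of order $e^{-\Theta(x\log x)}$, the same order as the (infinitely divisible) Poisson law, so the Steutel–Van Harn tail bound \eqref{Steutel-VanHarn} is not obviously strong enough to detect non-infinite-divisibility here.
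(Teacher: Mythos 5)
Your proposal is correct and follows essentially the same route as the paper: both reduce the claim to the non-infinite divisibility of $E_{\alpha}(1)$ via the renewal strong law, the self-similarity $E_{\alpha}(t)\stackrel{d}{=}t^{\alpha}E_{\alpha}(1)$, and the closure of the class of ID distributions under deterministic scaling and weak limits. The only cosmetic difference is that the paper first replaces $N(E_{\alpha}(t))$ by $N(t^{\alpha}E_{\alpha}(1))$ in distribution and then takes an almost sure limit, whereas you keep $E_{\alpha}(t)$ and apply Slutsky's theorem to the product decomposition; both yield the same weak limit $\lambda E_{\alpha}(1)$ and the same contradiction.
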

\begin{proof}
Note that $E_{\alpha}(1)> 0$ a.s. Hence, as $t\rightarrow
\infty$, it follows $t^{\alpha}E_{\alpha}(1) \rightarrow\infty$
a.s. By an application of the renewal theorem, we have
\begin{align*}
\lim_{t\rightarrow\infty} \frac{N(t^{\alpha}E_{\alpha
}(1))}{t^{\alpha}} &= \lim_{t\rightarrow\infty}
\frac{N(t^{\alpha
}E_{\alpha}(1))}{t^{\alpha}E_{\alpha}(1)} E_{\alpha}(1)
\\
&\stackrel{a.s.}\longrightarrow\lambda E_{\alpha}(1).
\end{align*}
Further due to self-similarity of $E_{\alpha}(t)$, it follows that
$N(E_{\alpha}(t))\stackrel{d} = N(t^{\alpha}E_{\alpha}(1))$ and
hence $N(E_{\alpha}(t))/t^{\alpha}\stackrel{d}\rightarrow\lambda
E_{\alpha}(1)$. Suppose that $N(E_{\alpha}(t))$ has infinitely
divisible distribution then $N(E_{\alpha}(t))/t^{\alpha}$ will also
have an ID distribution (see, e.g., Steutel and Van
Harn, \cite{SteHar2004}, Prop. 2.1, p. 94). Since the limit of a sequence of random
variables with ID distributions has an infinitely
divisible distribution (see, e.g., Sato \cite{Sat1999}; Steutel and Van Harn,
\cite{SteHar2004}), we have that $E_{\alpha}(1)$ has an ID
distribution, and hence contradiction.
\end{proof}

Meerschaert et al. \cite{Meeetal2009} establish that the fractional Poisson process
introduced by Laskin \cite{Las2003} can be obtained from the standard
Poisson process by the time-change with an ISS. Since the Poisson process is a renewal process, we
have the following result.
%
\begin{corollary}
Let $M(t)$ be the standard Poisson process. Then the fractional Poisson
process which is defined by $M^*(t) = M(E_{\alpha}(t))$ does not have
an ID distribution. Further, since $M^*(t)$ is also a
renewal process with Mittag-Leffler waiting times, the time-changed
process defined by $M^{**}(t) = M^*(E_{\beta}(t))$ does not have an
ID distribution.
\end{corollary}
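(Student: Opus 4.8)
The plan is to reduce both assertions to Proposition~\ref{non-id-renewal}, which already establishes that a renewal process with finite-mean inter-arrival times, time-changed by an ISS, fails to be ID. The whole corollary is really a bookkeeping exercise in fitting each process into that template; the only genuine subtlety lies in the second half.

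For the first assertion, the standard Poisson process $M(t)$ is a renewal process whose inter-arrival times are exponential of rate $\lambda$, so $\mathbb{E}W_1 = \lambda^{-1} < \infty$. By the result of Meerschaert et al.\ quoted just above the corollary, the fractional Poisson process is exactly $M^*(t) = M(E_{\alpha}(t))$, i.e.\ this renewal process time-changed by an ISS of index $\alpha \in (0,1)$. Proposition~\ref{non-id-renewal} then applies verbatim and gives that $M^*(t)$ is not ID. This step is immediate and needs no new estimate.

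For the second assertion I would first flag the obstacle, which is exactly the point I expect to cause trouble. It is tempting to argue that $M^*(t)$ is itself a renewal process, with Mittag-Leffler distributed waiting times, and simply re-apply Proposition~\ref{non-id-renewal} to $M^{**}(t) = M^*(E_{\beta}(t))$. The difficulty is that the Mittag-Leffler waiting times of the fractional Poisson process have a power-law tail $\mathbb{P}(W > t) \sim c\, t^{-\alpha}$ with $\alpha \in (0,1)$, so their mean is \emph{infinite}. Since the proof of Proposition~\ref{non-id-renewal} invokes the elementary renewal theorem $N(t)/t \to \lambda$, which requires finite mean, it does not transfer directly to a renewal process with Mittag-Leffler waiting times.

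To circumvent this I would use the composition (semigroup) property of inverse stable subordinators established in Section~4. Unwinding the definitions, $M^{**}(t) = M^*(E_{\beta}(t)) = M\bigl(E_{\alpha}(E_{\beta}(t))\bigr)$ with $E_{\alpha}$ and $E_{\beta}$ independent ISS. Because $E_{\alpha}\circ E_{\beta}(t)$ is equal in distribution to an ISS $E_{\alpha\beta}(t)$ of index $\alpha\beta$, and $M$ is independent of the time-change, we obtain $M^{**}(t) \stackrel{d}= M(E_{\alpha\beta}(t))$ for each fixed $t$. The right-hand side is once again the standard Poisson process, a renewal process with finite-mean exponential waiting times, time-changed by an ISS, now of index $\alpha\beta \in (0,1)$. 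Proposition~\ref{non-id-renewal} applies to this representation and shows that $M(E_{\alpha\beta}(t))$, hence $M^{**}(t)$, is not ID. In other words, the self-similarity/composition identity replaces the problematic infinite-mean renewal structure and restores the finite-mean hypothesis that Proposition~\ref{non-id-renewal} demands.
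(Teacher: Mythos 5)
Your first step is exactly the paper's: the Poisson process is a renewal process with exponential, hence finite-mean, inter-arrival times, so Proposition \ref{non-id-renewal} applies directly to $M^*(t)=M(E_\alpha(t))$. For the second assertion, however, you take a genuinely different route, and you are right to do so: the paper's stated justification is to view $M^*(t)$ as a renewal process with Mittag-Leffler waiting times and re-apply Proposition \ref{non-id-renewal}, but those waiting times have tail $\mathbb{P}(W>t)\sim c\,t^{-\alpha}$ with $\alpha\in(0,1)$ and therefore infinite mean, so the hypothesis $\mathbb{E}W_1=\lambda^{-1}<\infty$ (and the elementary renewal theorem used in the proof) fails as written. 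Your fix --- unwinding $M^{**}(t)=M(E_\alpha(E_\beta(t)))$ and invoking the Section 4 identity $E_\alpha\circ E_\beta(t)\stackrel{d}{=}E_{\alpha\beta}(t)$ to write $M^{**}(t)\stackrel{d}{=}M(E_{\alpha\beta}(t))$, then applying Proposition \ref{non-id-renewal} with index $\alpha\beta\in(0,1)$ --- is clean and uses only results already established in the paper. What the paper's route would buy, if repaired (e.g., by a ratio-limit argument adapted to infinite-mean renewal processes), is generality: it would cover iterated time changes of arbitrary renewal processes, not just those reducible to a Poisson process by the composition semigroup. Your argument buys correctness at the price of being specific to the Poisson/ISS structure, and for the corollary as stated that is all that is needed. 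One small point worth making explicit: the composition identity gives equality of one-dimensional distributions, which is precisely what matters here since infinite divisibility is a property of the marginal law of $M^{**}(t)$ for fixed $t$.
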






\end{document}